\DeclareMathAlphabet\gothic{U}{euf}{m}{n}
\newcommand{\PTR}{\operatorname{PT}\nolimits(\R^2)}
\newcommand{\tgamma}{\gamma}
\def\th{\theta}
\newcommand{\ul}{\textbf}
\newcommand{\R}{\mathbb{R}}
\newcommand{\Z}{\mathbb{Z}}
\newcommand{\desda}{\Leftrightarrow}
\newcommand{\SE}{\operatorname{SE(2)}\nolimits}
\newcommand{\am}{\operatorname{am}\nolimits}
\newcommand{\sn}{\operatorname{sn}\nolimits}
\newcommand{\cn}{\operatorname{cn}\nolimits}
\newcommand{\dn}{\operatorname{dn}\nolimits}
\newcommand{\E}{\operatorname{E}\nolimits}
\begin{document}

\title{Vessel Tracking via Sub-Riemannian Geodesics on $\R^2 \times P^{1}$}

\author{E.J.~Bekkers\inst{1*} \and R.~Duits\inst{1*} \and A.~Mashtakov \inst{2*} \and Yu.~Sachkov \inst{2}\thanks{Joint main authors. \hspace{0.7\textwidth} $\left. \right.$ The ERC  is gratefully acknowledged for financial support (ERC-StG nr. 335555).}}
\authorrunning{Bekkers, Duits, Mashtakov, Sachkov --------\today--------}
\institute{
$^1$ Eindhoven University of Technology, The Netherlands, \\
Department of Mathematics and Computer Science,
\\$^2$ Program Systems Institute of RAS, Russia,\\
Control Processes Research Center
 \\ \email{\{E.J.Bekkers, R.Duits\}@tue.nl, \{alexey.mashtakov, yusachkov\}@gmail.com}}
%

\maketitle              

\begin{abstract}
We study a data-driven sub-Riemannian (SR) curve optimization model for connecting local orientations in orientation lifts of images. Our model lives on the projective line bundle $\R^{2} \times P^{1}$, with $P^{1}=S^{1}/_{\sim}$ with identification of antipodal points.
It extends previous cortical models for contour perception on $\R^{2} \times P^{1}$ to the data-driven case.
We provide  a complete (mainly numerical) analysis of the dynamics of the 1st Maxwell-set with growing radii of SR-spheres, revealing the cut-locus. Furthermore, a comparison of the cusp-surface in $\R^{2} \times P^{1}$ to its counterpart in $\R^{2} \times S^{1}$ of a previous model, reveals a general and strong reduction of cusps in spatial projections of geodesics.
Numerical solutions of the model are obtained by a single wavefront propagation method relying on a simple extension of existing anisotropic fast-marching or iterative morphological scale space methods. 
Experiments show that the projective line bundle structure greatly reduces the presence of cusps.
Another advantage of including $\R^2 \times P^{1}$ instead of $\R^{2} \times S^{1}$ in the wavefront propagation is reduction of computational time.
\keywords{Sub-Riemannian geodesic, tracking, projective line bundle} \end{abstract}  

\section{Introduction}
In image analysis extraction of salient curves such as blood vessels, is often tackled by first lifting the image data to a new representation defined on the higher dimensional space of positions and directions, followed by a geodesic tracking~\cite{PeyreCohen2010,Caselles1997,Cohen97} in this lifted space \cite{pechaud,bekkersPhD,chendaPhD}. Benefits of such approaches are that one can generically deal with complex structures such as crossings \cite{pechaud,chendaPhD,BekkersSIAM}, bifurcations \cite{duitsmeestersmirebeauportegies}, and low-contrast \cite{bekkersPhD,chendaPhD,jiong},
while accounting for contextual alignment of local orientations
\cite{bekkersPhD,chendaPhD}. The latter can be done in the same way as in cortical models of visual perception of lines \cite{cittisarti,petitot1999,boscain,mashtakov}, namely via sub-Riemannian (SR) geometry on the combined space of positions and orientations. In these cortical models, it is sometimes stressed \cite{boscain} that one should work in a projective line bundle $\R^{2} \times P^{1}$ with a partition of equivalence classes $P^{1}:=S^{1}/_{\sim}$ with $\ul{n}_{1} \sim \ul{n}_{2} \desda \ul{n}_{1}=\pm \ul{n}_{2}$. Furthermore, in the statistics of line co-occurrences in retinal images the same projective line bundle structure is crucial \cite{samaneh}.
Also, for many image analysis applications the orientation of an elongated structure is a well defined characteristic of a salient curve in an image, in contrast to an artificially imposed direction.

At first sight the effect of the identification of antipodal points might seem minor as the minimizing SR geodesic between two elements in $\R^{2} \times P^{1}$ is obtained by the minimum of the two minimizing SR geodesics 
in $\R^{2} \times S^{1}$ that arise (twice) by flipping the directions of the boundary conditions. However, this appearance is deceptive, it has a rather serious impact on geometric notions such as 1) the 1st Maxwell set (where two distinct geodesics with equal length meet for the first positive time), 2) the cut-locus (where a geodesic looses optimality), 3) the cusp-surface (where spatial projections of SR geodesics show a cusp).
 Besides an analysis of the geometric consequences in Sect.~\ref{sec:model}, \ref{ch:Maxwell}, \ref{sec:cusp}, we show that the projective line bundle provides a better tracking with much less cusps in Sect.~\ref{sec:vessel}.

\section{The Projective Line Bundle Model}\label{sec:model}
The projective line bundle $\PTR$ is a quotient of Lie group $\SE$, and one can define a sub-Riemannian structure (SR) on it.
The group $\SE =\R^{2} \rtimes SO(2)$ of planar roto-translations is identified with the coupled space of positions and orientations $\R^2 \times S^1$, and 
for each $g = (x,y,\theta) \in \R^2 \times S^1 \cong \SE$ one has 
\begin{equation} \label{product}
L_g g' = g \odot g' = (x' \cos\th + y' \sin \th + x, -x' \sin\th + y' \cos \th + y, \th'+\th).
\end{equation}
Via the push-forward $(L_g)_*$ one gets the left-invariant frame $\{\mathcal{A}_1, \mathcal{A}_2, \mathcal{A}_3\}$ from the Lie-algebra basis $\{A_1,A_2,A_3\} = \{\left.\partial_x\right|_e, \left.\partial_\th\right|_e, \left.\partial_y\right|_e\}$ at the unity $e = (0, 0, 0)$:
%
\begin{equation*} \label{leftinvariant}
\mathcal{A}_{1}= \cos \theta \, \partial_{x} +\sin \theta \,
 \partial_{y}, 
 \quad
 \mathcal{A}_{2}= \partial_{\theta}, 
 \quad
 \mathcal{A}_{3}= -\sin \theta \, \partial_{x} +\cos \theta \,
 \partial_{y}. 
\end{equation*}
\noindent
Let $\mathcal{C}:\SE \to \R^+$ denote a smooth cost function strictly bounded from below.
The SR-problem on $\SE$ is to find a Lipschizian curve $\tgamma:[0,T] \to \SE$, s.t.
\begin{equation}\label{eq:geodcontsystorig}
\begin{array}{c}
\dot \tgamma(t) = u^1(t) \, \mathcal{A}_{1}|_{\tgamma(t)} + u^2(t) \, \mathcal{A}_{2}|_{\tgamma(t)}, 
\qquad \tgamma(0) = g_0, \quad \tgamma(T) = g_1, \\[5pt]
l(\gamma(\cdot)) := \int\limits_0^{T}\mathcal{C}(\gamma(t))\sqrt{\xi^2 |u^1(t)|^2 + |u^2(t)|^2} \, {\rm d} t \to \min,
\end{array}
\end{equation}
 with controls $u^1, u^2: [0,T] \to \R$ are in $L^\infty[0,T]$, boundary points $g_0$, $g_1$ are given, $\xi>0$ is constant, and terminal time $T>0$ is free.
The SR distance is 
\begin{equation}
d(g_0,g_1)= \underset{{\footnotesize
\begin{array}{c}
\gamma \in \textrm{Lip}([0,1],\SE), \\
\dot{\gamma} \in \left.\Delta \right|_{\gamma}, \
\gamma(0)=g_0, \gamma(1)=g_{1}
\end{array}
}}{\min} \int_{0}^{1} \sqrt{\mathcal{G}_{\gamma(\tau)}(\dot{\gamma}(\tau),\dot{\gamma}(\tau))}\, {\rm d}\tau,
\end{equation}
with $\mathcal{G}_{\gamma(\tau)}(\dot{\gamma}(\tau),\dot{\gamma}(\tau))=
\mathcal{C}^2(\gamma(\tau))\left(\xi^2 |u^{1}(\tau T)|^2 + |u^{2}(\tau T)|^2\right)$, $\tau=\frac{t}{T} \in [0,1]$, and $\Delta:=\textrm{span}\{\mathcal{A}_{1},\mathcal{A}_{2}\}$
with dual $\Delta^*=\textrm{span}\{\cos \theta\, {\rm d}x+\sin \theta\, {\rm d}y, {\rm d}\theta\}$.
 The projective line bundle $\PTR$ is a quotient $\PTR = \SE /_{\sim}$ with identification $(x,y,\th)\sim(x,y,\th \!+\! \pi)$. 
The SR distance in $\PTR \cong \R^{2} \times P^{1} =
\R^{2} \times \R/\{\pi \Z\}$ is
\begin{equation} \label{dist}
\begin{array}{l}
\overline{d}(q_0, \, q_1) :=
\min \{
d(g_0,\, g_1) \, , \, d(g_0  \odot(0,0,\pi), \, g_1 \odot (0,0,\pi)),\\
\qquad \qquad \qquad  \quad \, \, \, d(g_0,\, g_1 \odot (0,0,\pi)) \, , \,  d(g_0 \odot (0,0,\pi),\,  g_1)\} \\
\qquad \qquad \, \,  = \min \left\{\,
d(g_0,\, g_1)\, , \, d(g_0 \odot (0,0,\pi), \, g_1)
\right\}
\end{array}
\end{equation}
for all $q_i = (x_i,y_i,\th_i) \in \PTR$, $g_i = q_i = (x_i,y_i,\th_i) \in \SE, i \in \{0,1\}$.
Eq.~\!(\ref{dist}) is due to $\gamma^{*}_{g_0 \to g_{1}}(\tau)=\gamma^{*}_{\tilde{g}_1 \to \tilde{g}_0}(1\!-\!\tau)$, with $\tilde{g}_{i}:= g_i\!\odot\!(0,0,\pi)$, with $\gamma^{*}_{g_0 \to g_{1}}$ a minimizing geodesic from $g_0=(\ul{x}_0,\theta_0)$ to $g_{1}=(\ul{x}_{1},\theta_1)$, and \textbf{has 2 consequences: }\\
\textbf{1) }One can account for the $\PTR$ structure in the building of the distance function before tracking takes place, cf.~\!Prop.~\!\ref{prop:nul} below. \\
\textbf{2) }It affects cut-locus, the first Maxwell set (Prop.~\!\ref{prop:1}\&\ref{prop:2}), and cusps (Prop.~\!\ref{prop:4}). \\
We apply a Riemannian limit \cite[Thm.2]{duitsmeestersmirebeauportegies} where $\overline{d}$ is approximated by Riemannian metric $\overline{d}^{\epsilon}$ induced by 
$ \mathcal{G}^{\epsilon}_q(\dot{q},\dot{q}):= \mathcal{G}_q(\dot{q},\dot{q}) + \frac{\mathcal{C}^2(q)\, \xi^2}{\epsilon^2}\left|-\dot{x}\sin \theta +\dot{y}\cos \theta \right|^2$ for 
$\dot{q}\!=\!(\dot{x},\dot{y},\dot{\theta}), q\!=\!(x,y,\theta), 0\!< \! \!\epsilon\! \!\ll \!1$, and
 use SR gradient $\mathcal{G}_{q}^{-1}{\rm d}W(q):=\!\mathcal{G}_{q}^{-1}\!P_{\Delta*}{\rm d}W(q)\!=\!
\frac{\mathcal{A}_1 W(q)}{\xi^2 \mathcal{C}^2(q)}\!\left.\mathcal{A}_{1}\right|_{q}\! + \frac{\mathcal{A}_2 W(q)}{\mathcal{C}^2(q)} \!\left.\mathcal{A}_{2}\right|_{q}$ for steepest descent on
 $W=\overline{d}(\cdot,e)$.
\begin{proposition}
\label{prop:nul}
Let $q \neq e$ be chosen such that there exists a unique minimizing geodesic
$\gamma^*_{\epsilon}: [0,1] \to \PTR$ of $\overline{d}^{\epsilon}(q,e)$ for $\epsilon \geq 0$ sufficiently small, that does not contain conjugate points (i.e. the differential of the exponential map of the Hamiltonian system is non-degenerate along $\gamma_{\epsilon}^{*}$, cf.~\!\cite{notes}).
Then $\tau \mapsto \overline{d}(e,\gamma^*_{0}(\tau))$ is smooth and $\gamma^{*}_0(\tau)$ is given by $\gamma_0^*(\tau)=\gamma_b^*(1-\tau)$ with
\begin{equation}\label{BT}
\left\{
\begin{array}{l}
\dot{\gamma}^*_b(\tau) = - W(q)\, (\mathcal{G}^{-1}_{\gamma^*_b(\tau)}{\rm d}W)(\gamma^*_b(\tau)), \ \ \tau \in [0,1] \\
\gamma^*_b(0)=q,
\end{array}
\right.
\end{equation}
with $W(q)$ the viscosity solution~\!(cf.\!~\cite{bressan}) of the following boundary value problem: 
\begin{equation} \label{eq:eikonalsystfull}
\!
\left\{
\begin{array}{l}
\mathcal{G}_{q}\left(\,\mathcal{G}_{q}^{-1}{\rm d}W(q),\,\mathcal{G}_{q}^{-1}{\rm d}W(q)\right)=1
\textrm{ for }q\neq e, \\
W(x,y,\pi)=W(x,y,0), \textrm{ for all }(x,y) \in \R^2, \\
W(0,0,0)=W(0,0,\pi)=0.
\end{array}
\right.
\end{equation}
\end{proposition}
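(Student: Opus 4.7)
The plan is to reduce the statement to the classical sub-Riemannian Hamilton-Jacobi theory on $\SE$ via the decomposition \eqref{dist}, exploiting that on $\PTR$ the distance $W=\overline{d}(\cdot,e)$ is locally (away from the Maxwell/cut set) a smooth $\SE$-distance from one of the two lifts $e$ or $e\odot(0,0,\pi)$. The hypothesis of a unique minimizing geodesic $\gamma^*_0$ without conjugate points is precisely what ensures we stay in this regular regime near the endpoint $q$.

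First I would unpack \eqref{dist} to observe that in a neighborhood $U$ of $q$ where the minimum is attained uniquely by (say) $d(g_0,g)$ with $g_0\in\{e, e\odot(0,0,\pi)\}$, the function $W$ agrees with the smooth $\SE$ SR distance $d(g_0,\cdot)$. Uniqueness of the minimizer rules out a tie between the two candidates in \eqref{dist}, and the no-conjugate-points hypothesis rules out the other source of non-smoothness. Then I would invoke the standard fact (e.g.\ via the inverse function theorem applied to the SR exponential map, cf.~\!\cite{notes}) that the SR distance from a fixed base point is smooth outside the cut locus, hence smooth along $\gamma^*_0$, and $\tau\mapsto W(\gamma^*_0(\tau))=\tau\, W(q)$ is smooth and even linear after SR arc-length reparametrization.

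Next, on $U$ the smooth function $W$ satisfies the SR eikonal equation $\mathcal{G}_q(\mathcal{G}_q^{-1}{\rm d}W,\mathcal{G}_q^{-1}{\rm d}W)=1$, which is the Hamilton-Jacobi-Bellman equation for the value function of \eqref{eq:geodcontsystorig}; globally this extends as a viscosity solution by standard theory \cite{bressan}. The periodic boundary condition $W(x,y,\pi)=W(x,y,0)$ is a direct translation of the equivalence $(x,y,0)\sim(x,y,\pi)$ defining $\PTR$, and $W(0,0,0)=W(0,0,\pi)=0$ holds since both points represent $e\in\PTR$.

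For the backward tracking equation \eqref{BT}, I would use the classical characteristic equation: if $\sigma \mapsto \gamma^*_0(\sigma)$ is parametrized by SR arc-length on $[0, L]$ with $L=W(q)$, then $\dot{\gamma}^*_0(\sigma) = -(\mathcal{G}^{-1}{\rm d}W)(\gamma^*_0(\sigma))$ evaluated with the appropriate sign convention, since the unit-speed geodesic descends $W$ from the endpoint. Reversing time via $\gamma^*_b(\tau)=\gamma^*_0(1-\tau)$ and rescaling arc-length $\sigma=L\tau=W(q)\tau$ introduces the scalar factor $W(q)$, yielding exactly \eqref{BT}. The main obstacle is the careful bookkeeping around the projective identification: one must check that locally in $U$ a single branch of the minimum in \eqref{dist} is active, so that the smooth $\SE$-arguments transfer to $\PTR$ without ambiguity; the uniqueness and no-conjugate-points hypotheses are tailored to guarantee exactly this.
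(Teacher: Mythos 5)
Your argument is correct and follows essentially the same route as the paper's proof: reduce to the $\SE$ eikonal/gradient-descent characterization of SR minimizers, obtain the second and third conditions in (\ref{eq:eikonalsystfull}) from the quotient $P^{1}=S^{1}/_{\sim}$ and from (\ref{dist}) respectively, and use the absence of Maxwell and conjugate points to guarantee differentiability of $W$ along $\gamma^*_0$. The only difference is presentational: the paper delegates the gradient-descent characterization and the smoothness of the SR distance to citations (Thms.~2 and 4 of \cite{duitsmeestersmirebeauportegies} and Thm.~11.15 of \cite{AgrachevBarilariBoscain}), whereas you re-derive these from standard Hamilton--Jacobi and characteristics arguments.
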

\begin{proof}
By \cite[Thm 2 and Thm 4]{duitsmeestersmirebeauportegies}, (extending \cite[Thm 3.2]{BekkersSIAM} to non-uniform cost) we get minimizing SR geodesics in $\SE$ by intrinsic gradient descent on $W$. The 2nd condition in (\ref{eq:eikonalsystfull}) is due to $P^{1}=S^{1}/_{\sim}$, the 3rd is due to (\ref{dist}). 
When applying \cite[Thm 4]{duitsmeestersmirebeauportegies} we need differentiability of the SR distance. As our assumptions exclude conjugate and Maxwell-points, this holds by \cite[Thm 11.15]{AgrachevBarilariBoscain}.$\hfill \Box$
\end{proof}
At least for $\epsilon=0$ and $\mathcal{C}=1$ the assumption in Prop.~\ref{prop:nul} on conjugate points is obsolete by \cite{yuriSE2FINAL} and \cite[Thm~3.2, App.D]{BekkersSIAM}.
\section{Analysis of Maxwell sets for $\mathcal{C}=1$}\label{ch:Maxwell}
A {\it sub-Riemannian sphere} is a set of points equidistant from $e$. Thus, a sphere of radius $R$ centred at $e$ is given by $\mathcal{S}(R) = \left\{q \in \PTR \, | \, \overline{d}(e,q) = R\right\}$.
A {\it Maxwell point} is a point in $\PTR$ connected to $e$ by multiple SR length minimizers. I.e. its {\it multiplicity} is $>1$. All Maxwell points form a {\it Maxwell set}:
\[
\begin{array}{ll}
\mathcal{M} = &\big\{ q \in \PTR \, |  \, \exists \, \gamma^1, \gamma^2 \in \textrm{Lip}([0,1],\PTR), \text{ s. t. } 
  \dot{\gamma}^i \in \left.\Delta \right|_{\gamma^i}, \\ &\ \ \ \ \gamma^i(0)=e, \, \gamma^i(1)=q, \, \text{ for } i = 1,2, \textrm{ and }
  \gamma^1 \neq \gamma^2, \, l(\gamma^1) = l(\gamma^2) = \overline{d}(e,q) \big\},
\end{array}
\]
In SR geometry, an $n$-th Maxwell point is a point, where a geodesic meets another geodesic for the $n$-th time. In this work, by Maxwell point we mean the 1-st Maxwell point, where geodesics lose their optimality.
The set $\mathcal{M}$ is a stratified manifold $\mathcal{M} = \bigcup_{i} \mathcal{M}_i$. We aim for maximal dimension strata: $\dim(\mathcal{M}_i) = 2$.
\begin{figure}
\centerline{
\includegraphics[width=0.77\hsize]{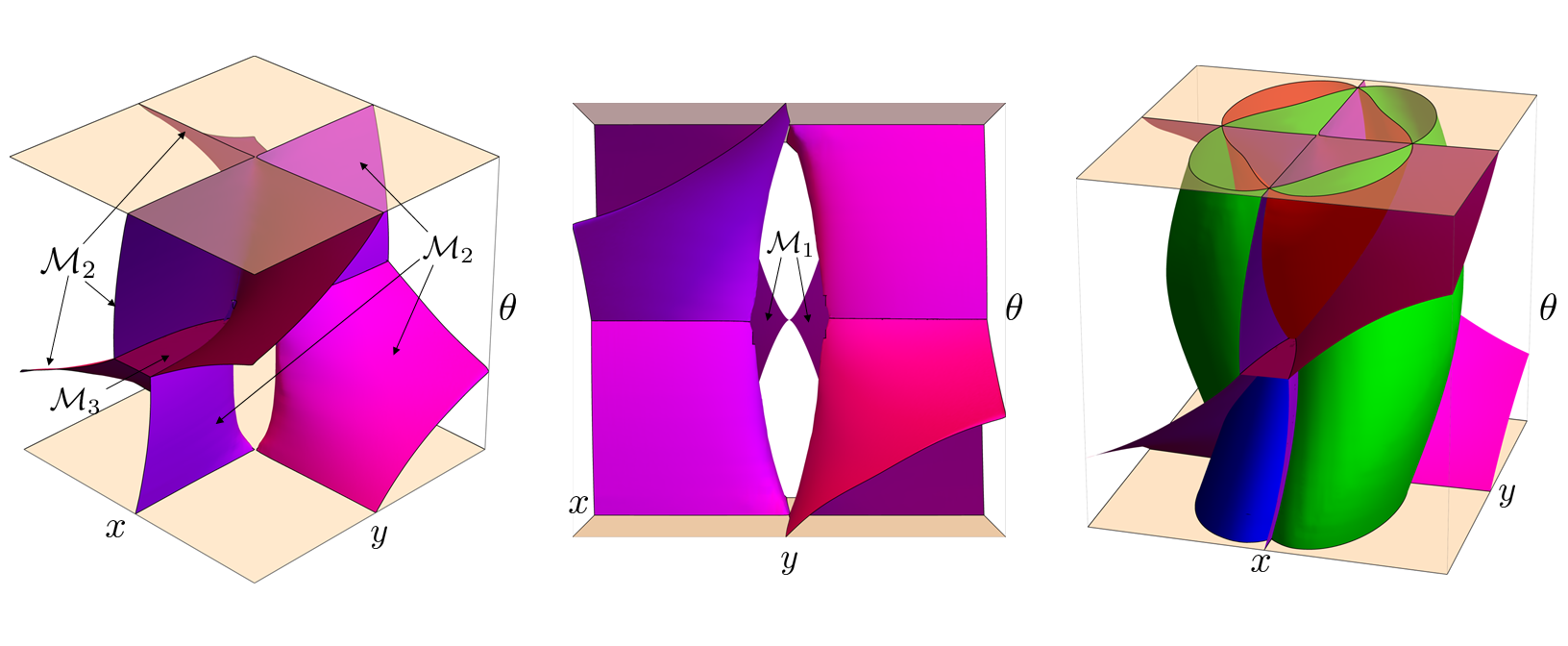}
}
\caption{Maxwell set and its intersection\! (right image)\! with the SR\! sphere in Fig.\!~\!\ref{fig:Mset}.
\label{fig:MaxwellSet}}
\end{figure}
\begin{proposition}\label{prop:1}
Let $W(q) = \overline{d}(e,q)$ and let $W^{\SE}(g) = d(e,g)$. The Maxwell set $\mathcal{M}$ is given by $\mathcal{M} = \bigcup_{i=1}^{3} \mathcal{M}_i$, see Fig.~\ref{fig:MaxwellSet}, where
\begin{itemize}
\item $\mathcal{M}_1$ is a part of local component of Maxwell set $\operatorname{Exp(MAX^2)}$ in $\SE$, see  \cite[Theorem 5.2]{yuriSE2}, restricted by the condition $t_1^{\operatorname{MAX}} = W(\gamma(t_1^{\operatorname{MAX}}))$;
\item $\mathcal{M}_2$ is given by $W^{\SE}(g) = W^{\SE}(g \odot (0,0,\pi))$; 
\item $\mathcal{M}_3$ is a part of global component of Maxwell set $\operatorname{Exp(MAX^5)}$ in $\SE$, see  \cite[Theorem 5.2]{yuriSE2}, restricted by the condition $t_1^{\operatorname{MAX}} = W(\gamma(t_1^{\operatorname{MAX}}))$.
\end{itemize}
\end{proposition}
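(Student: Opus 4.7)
The plan is to lift everything to $\SE$ via the canonical projection $\pi : \SE \to \PTR$, which by construction is a local isometry for the induced SR structures. Since the fibre over a point $q=(x,y,\theta)\in \PTR$ consists of exactly two elements $g$ and $\tilde g:=g\odot(0,0,\pi)$ in $\SE$, and since a Lipschitz horizontal curve in $\PTR$ lifts uniquely (once a starting point is fixed) to a horizontal curve in $\SE$ of the same length, every minimizer $\gamma : [0,1]\to \PTR$ from $e$ to $q$ corresponds to a minimizer in $\SE$ from $e$ to either $g$ or $\tilde g$. Combined with the formula \eqref{dist}, this means $\overline{d}(e,q)=\min\{W^{\SE}(g),W^{\SE}(\tilde g)\}$, and the set of SR minimizers from $e$ to $q$ in $\PTR$ is the union of the sets of SR minimizers from $e$ to $g$ and from $e$ to $\tilde g$ in $\SE$ that achieve this common length.

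Next I would enumerate the ways that this union can contain two distinct elements. There are exactly three combinatorial possibilities: (a) two distinct minimizers in $\SE$ ending at the same lift $g$, (b) two distinct minimizers in $\SE$ ending at the same lift $\tilde g$, and (c) one minimizer ending at $g$ and another of equal length ending at $\tilde g$. Case (c) is precisely the condition $W^{\SE}(g)=W^{\SE}(g\odot(0,0,\pi))$, yielding the stratum $\mathcal{M}_2$. Cases (a) and (b) project to the same subset of $\PTR$ (since $(x,y,\theta)$ and $(x,y,\theta+\pi)$ represent the same $q$), so it suffices to analyze (a).

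For (a) I would apply Sachkov's classification \cite[Theorem 5.2]{yuriSE2} of the Maxwell set in $\SE$. That theorem decomposes the $\SE$-Maxwell set into strata $\operatorname{Exp}(\operatorname{MAX}^i)$, of which only the local stratum $\operatorname{Exp}(\operatorname{MAX}^2)$ and the global stratum $\operatorname{Exp}(\operatorname{MAX}^5)$ are of maximal dimension $2$, and these descend to $\mathcal{M}_1$ and $\mathcal{M}_3$ respectively. The restriction $t_1^{\operatorname{MAX}}=W(\gamma(t_1^{\operatorname{MAX}}))$ simply encodes the first-Maxwell requirement: at the Maxwell time, the geodesic must still realize the SR distance, so that it has not already lost optimality at an earlier Maxwell time. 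Finally one checks that the other strata in $\operatorname{Exp}(\operatorname{MAX}^i)$ have dimension $<2$ and can be absorbed/discarded under the stated focus on maximal-dimension strata.

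The main obstacle is bookkeeping rather than conceptual: one must verify that the first-Maxwell-time conditions transfer cleanly from $\SE$ to $\PTR$, i.e.\ that a point in $\operatorname{Exp}(\operatorname{MAX}^2)\cup\operatorname{Exp}(\operatorname{MAX}^5)$ which is no longer a first Maxwell point in $\SE$ (because its $\SE$-geodesic has already been beaten by another $\SE$-geodesic of smaller length, or by the antipodal route appearing in \eqref{dist}) is correctly either excluded from $\mathcal{M}_1\cup\mathcal{M}_3$ or reassigned to $\mathcal{M}_2$. This is handled by imposing $t_1^{\operatorname{MAX}}=W(\gamma(t_1^{\operatorname{MAX}}))$ with $W=\overline{d}(e,\cdot)$ in $\PTR$, rather than with $W^{\SE}$, and by noting that the intersection $\mathcal{M}_2\cap(\mathcal{M}_1\cup\mathcal{M}_3)$ is a lower-dimensional stratum that does not affect the maximal-dimension decomposition claimed. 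With these conventions the enumeration (a)--(c) is exhaustive and exclusive, and yields the stated decomposition $\mathcal{M}=\mathcal{M}_1\cup\mathcal{M}_2\cup\mathcal{M}_3$.
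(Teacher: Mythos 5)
Your proposal is correct and follows essentially the same route as the paper's own proof: split Maxwell points of $\PTR$ into those inherited from the $\SE$ Maxwell strata $\operatorname{Exp}(\operatorname{MAX}^2)$, $\operatorname{Exp}(\operatorname{MAX}^5)$ of \cite[Theorem 5.2]{yuriSE2} (your cases (a)--(b), subject to the lift realizing $\overline{d}$) and those newly created by the quotient, i.e.\ the roots of $W^{\SE}(g)=W^{\SE}(g\odot(0,0,\pi))$ (your case (c)). Your write-up is merely more explicit about the double-cover lifting argument and the first-Maxwell-time bookkeeping, which the paper leaves implicit.
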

\begin{proof}
There are two possible reasons for $\PTR\ni q = g/_{\sim}$ be a Maxwell point:
1) if $g$ is a Maxwell point in $\SE$, s.t. $W^{\SE}(g) = W(q)$ (
i.e. $W^{\SE}(g) \leq W^{\SE}(g \odot (0,0,\pi))$); 2) if $q$ is a (new) Maxwell point induced by the quotient (i.e. $q$ is a root of $W^{\SE}(g) = W^{\SE}( g \odot (0,0,\pi))$). Strata $\mathcal{M}_1$, $\mathcal{M}_3$ follow from $\operatorname{Exp(MAX^2)}$, $\operatorname{Exp(MAX^5)}$~\cite{yuriSE2}, while $\mathcal{M}_2$ is induced by $P^{1}=S^{1}/_{\sim}$. Set $\mathcal{M}_{3}$ is in $\theta=0$, as 
$\operatorname{Exp(MAX^5)}$ is in $\th=\pi$, which 
is now identified with $\th=0$.
\end{proof}
\begin{proposition}\label{prop:2}
The maximal multiplicity $\nu$ of a Maxwell point on a SR sphere depends on its radius $R$. Denote $\mathcal{M}^{R} = \mathcal{M}\cap \mathcal{S}(R)$ and $\mathcal{M}^{R}_{i} = \mathcal{M}_i\cap \mathcal{S}(R)$. 
One has the following development of  Maxwell set as $R$ increases, see Fig.~\ref{fig:Mset}:
\begin{enumerate}
\item if $0<R<\frac{\pi}{2}$ then $\mathcal{S}(R)$  is homeomorphic to $S^2$ and it coincides with SR sphere in $\SE$, $\mathcal{M}^{R} = \mathcal{M}^{R}_1$ and $\nu =2$;
\item if $R=\frac{\pi}{2}$ then $\mathcal{S}(R)$ is homeomorphic to $S^2$ glued at one point,  $\mathcal{M}^{R} = \mathcal{M}^{R}_1 \cup \mathcal{M}^{R}_2$, $\mathcal{M}^{R}_1 \cap \mathcal{M}^{R}_2 = \emptyset$, and $\nu =2$;
\item if $\frac{\pi}{2}<R<\overline{R}$ then $\mathcal{S}(R)$ is homeomorphic to $T^2$,  $\mathcal{M}^{R} = \mathcal{M}^{R}_1 \cup \mathcal{M}^{R}_2$, $\mathcal{M}^{R}_1 \cap \mathcal{M}^{R}_2 = \emptyset$ and $\nu =2$;
\item if $R=\overline{R}\approx \frac{17}{18} \pi$ then $\mathcal{S}(R)$ is homeomorphic to $T^2$, $\mathcal{M}^{R} = \mathcal{M}^{R}_1 \cup \mathcal{M}^{R}_2$,  and $\mathcal{M}^{R}_1$ intersects $\mathcal{M}^{R}_2$ at four (conjugate) points, $\nu =2$;
\item if $\overline{R}<R<\tilde{R}$ then $\mathcal{S}(R)$ is homeomorphic to $T^2$,  $\mathcal{M}^{R} = \mathcal{M}^{R}_1 \cup \mathcal{M}^{R}_2$, and $\mathcal{M}^{R}_1$ intersects $\mathcal{M}^{R}_2$ at four points, where $\nu =3$;
\item if $R=\tilde{R}\approx \frac{9}{8} \pi $ then $\mathcal{S}(R)$ is homeomorphic to $T^2$, $\mathcal{M} = \mathcal{M}^{R}_1 \cup \mathcal{M}^{R}_2 \cup \mathcal{M}^{R}_3$, $\mathcal{M}^{R}_1 = \mathcal{M}^{R}_3$, and $\mathcal{M}^{R}_2$ intersects $\mathcal{M}^{R}_1$ at two points, where $\nu =4$;
\item if $R>\tilde{R}$ then $\mathcal{S}(R)$ is homeomorphic to $T^2$, $\mathcal{M}^{R} = \mathcal{M}^{R}_2 \cup \mathcal{M}^{R}_3$ and $\mathcal{M}^{R}_2$ intersects $\mathcal{M}^{R}_3$ at four points, where  $\nu =3$.
\end{enumerate}
\end{proposition}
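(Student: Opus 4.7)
The plan is to analyze each of the seven cases by tracking, as $R$ grows, how the three strata $\mathcal{M}_i^R = \mathcal{M}_i \cap \mathcal{S}(R)$ identified in Proposition~\ref{prop:1} intersect the SR sphere. Since $\mathcal{M}_1$ and $\mathcal{M}_3$ descend from the $\SE$-Maxwell strata $\operatorname{Exp(MAX^2)}$ and $\operatorname{Exp(MAX^5)}$, whose closed-form parametrizations in elliptic functions are given in \cite{yuriSE2}, one can parametrize $\mathcal{M}_1^R$ and $\mathcal{M}_3^R$ directly and then cut them with $\mathcal{S}(R)$ via the restriction $t_1^{\operatorname{MAX}} = W(\gamma(t_1^{\operatorname{MAX}}))$ already appearing in Proposition~\ref{prop:1}. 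The stratum $\mathcal{M}_2^R$ is read off as the locus on $\mathcal{S}(R)$ where $W^{\SE}(g) = W^{\SE}(g \odot (0,0,\pi))$, directly from the $\SE$ distance function.

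For the topology of $\mathcal{S}(R)$ I would first establish that $d(e,(0,0,\pi)) = \pi$, so that $R = \pi/2$ is the smallest radius for which a point of $\mathcal{S}^{\SE}(R)$ and its antipodally shifted twin both lie within distance $R$ of $e$. For $R < \pi/2$ the quotient map $\SE \to \PTR$ is injective on $\mathcal{S}^{\SE}(R)$ and $\mathcal{S}(R) \cong S^2$; at $R = \pi/2$ the two $\SE$-preimages of $(0,0,\pi)$ collapse to a single point of $\PTR$, producing $S^2$ pinched at one point; for $R > \pi/2$ this identification extends to a neighborhood, and the resulting surface is homeomorphic to $T^2$ by a standard quotient argument on a sphere with two disks identified. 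This handles the topology claims in cases 1--7.

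Next, I would identify the two critical radii. The radius $\overline{R} \approx \frac{17}{18}\pi$ is defined as the smallest $R$ at which $\mathcal{M}_1^R$ touches $\mathcal{M}_2^R$ tangentially, which is a conjugate-point condition; by the discrete $\Z_2 \times \Z_2$ symmetry of the problem the tangential contact occurs simultaneously at four symmetric points, explaining case 4. The radius $\tilde{R} \approx \frac{9}{8}\pi$ is the smallest $R$ at which $\operatorname{Exp(MAX^5)}$ meets $\mathcal{S}(R)$ under the optimality restriction $t_1^{\operatorname{MAX}} = W = R$, i.e.\ where the stratum $\mathcal{M}_3^R$ first appears and, by its location in $\theta = 0$, coincides with the remaining portion of $\mathcal{M}_1^R$ at that radius.

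Finally, the multiplicity $\nu$ is computed pointwise: at a generic Maxwell point $\nu = 2$; at a transverse intersection of two strata $\nu$ rises by one (cases 5 and 7); and at the exceptional radius $\tilde{R}$ the coincidence $\mathcal{M}_1^R = \mathcal{M}_3^R$ together with the two additional intersections with $\mathcal{M}_2^R$ yields $\nu = 4$; past $\tilde{R}$ the stratum $\mathcal{M}_1^R$ vanishes because all corresponding $\SE$-geodesics are then beaten in length by their antipodal shortcut. The main obstacle is the rigorous pinpointing of $\overline{R}$ and $\tilde{R}$, since the conjugate-point condition and the $\operatorname{MAX^5}$-appearance condition amount to transcendental equations involving elliptic integrals; the argument is therefore primarily numerical, with rigor inherited from the analytic results on $\SE$-geodesics in \cite{yuriSE2,yuriSE2FINAL} and from the viscosity-solution characterization of $W$ in (\ref{eq:eikonalsystfull}).
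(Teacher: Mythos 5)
The paper in fact prints no proof of Proposition~\ref{prop:2} at all --- only the subsequent remark supplying the transcendental system (\ref{eq:RtildeSyst}) for $\tilde R$ from \cite{cut_sre2} --- and your outline is exactly the ``mainly numerical'' analysis the authors allude to: parametrize $\mathcal{M}_1,\mathcal{M}_3$ via the exact $\operatorname{Exp(MAX^2)}$, $\operatorname{Exp(MAX^5)}$ strata of \cite{yuriSE2}, adjoin the quotient-induced stratum $\mathcal{M}_2$, identify $\overline R$ by first (conjugate, tangential) contact of $\mathcal{M}_1^R$ with $\mathcal{M}_2^R$ and $\tilde R$ by the appearance of $\mathcal{M}_3^R$, and count minimizers at the stratum intersections. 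Your triangle-inequality argument for the pinching at $R=\pi/2$ (via $d(e,(0,0,\pi))=\pi$, which holds since $|\Delta\theta|\le\int|u^2|$ bounds the length from below) is sound and is more than the paper offers; only note the slip that the identified pair at that radius is $(0,0,\pm\pi/2)$, not ``the two preimages of $(0,0,\pi)$''.
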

\begin{figure}
\begin{minipage}{\hsize}
\centerline{
\includegraphics[width=0.82\hsize]{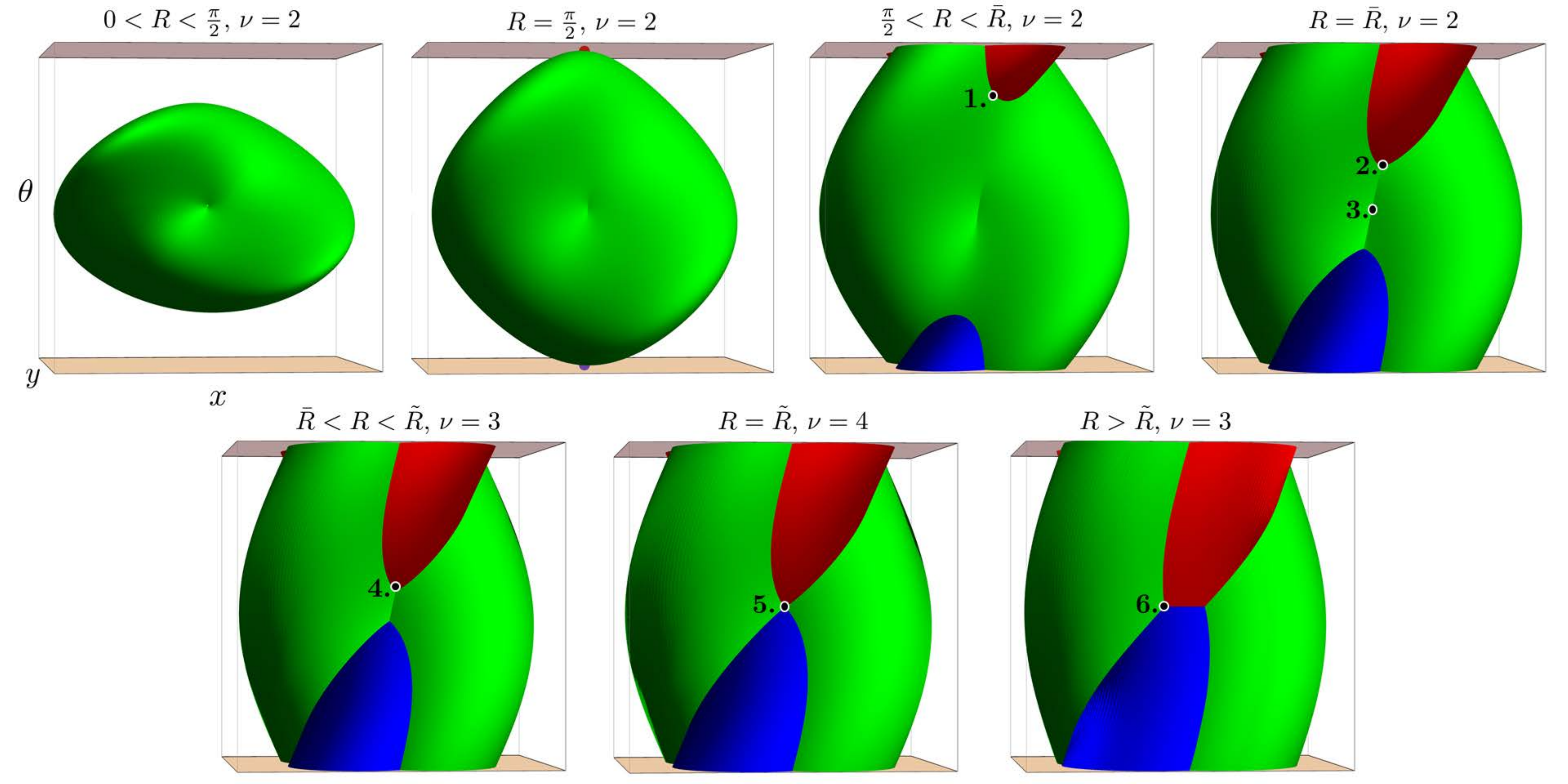}
}
\caption{Evolution of the 1st Maxwell set 
as the radius $R$ of the SR-spheres increases. \label{fig:Mset}}
\end{minipage}\\
\begin{minipage}{\hsize}
\centerline{
\includegraphics[width=0.85\hsize]{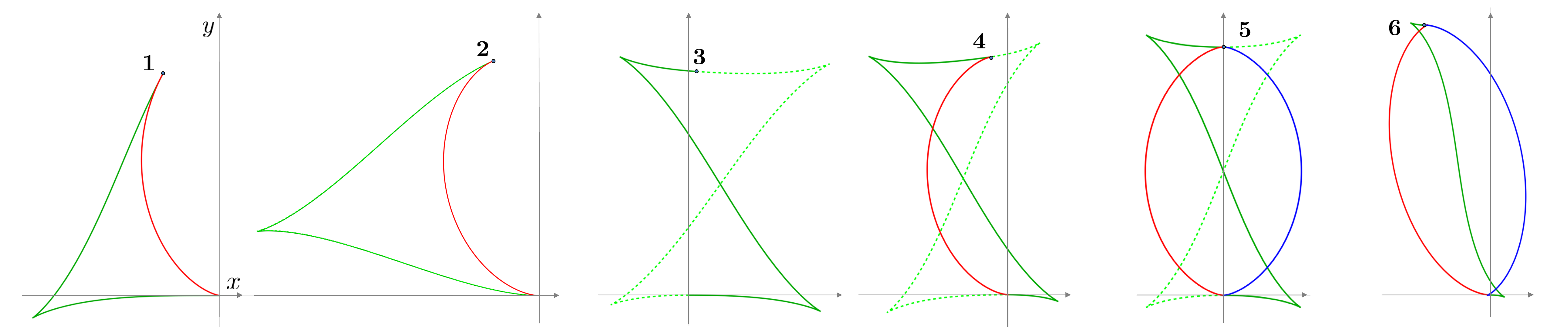}
}
\caption{SR length minimizers ending at the points indicated at Fig.~\ref{fig:Mset}. \label{fig:Geods}}
\end{minipage}
\end{figure}
\begin{remark}
Results in \cite[Sec.4]{cut_sre2} imply that $\tilde R$ can be computed from the system:
\begin{equation}\label{eq:RtildeSyst}
\tilde R/2 = K(k_1) = k_2 \, p_1(k_2), \
\frac{K(k_1) - E(k_1)}{k_1 \sqrt{1-k_2^2}} =  \frac{p_1(k_2) - \E(p_1(k_2),k_2)}{\dn(p_1(k_2), k_2)},
\end{equation}
where $K(k)$ and $E(k)$ are complete elliptic integrals of the 1st and 2nd kind; $\E(u,k) = E(\am(u,k),k)$, while $E(v,k)$ is the incomplete elliptic integral of the 2nd kind and $\am(u,k)$ is the Jacobian amplitude; $p_1(k)$ is the first positive root of  $\cn(p,k) (\E(p,k) - p) - \dn(p,k) \sn(p,k) = 0$; and $\sn(p,k)$, $\cn(p,k)$, $\dn(p,k)$ are Jacobian elliptic functions. 
Solving (\ref{eq:RtildeSyst}), we get $\tilde R  \approx 1.11545 \, \pi \approx 9/8 \, \pi$.
\end{remark}
\section{Set of Reachable End Conditions by Cuspless Geodesics}\label{sec:cusp}
\vspace{-0.3cm}
A {\it cusp point} $\ul{x}(t_0)$ on a spatial projection of a (SR) geodesic $t \mapsto (\ul{x}(t),\theta(t))$ in $\R^{2}\times S^{1}$  is
a point where the only spatial control switches sign, i.e. 
$u^{1}(t_0):= \dot{x}(t_0)\cos \theta (t_0)+ \dot{y}(t_0)\sin \theta(t_0)\!=\!0 \textrm{\, and }\ (u^{1})'(t_0) \neq 0$.
 In fact, the 2nd condition $(u^{1})'(t_0) \neq 0$ is obsolete \cite[App.C]{duitsmeestersmirebeauportegies}.
The next proposition shows that the occurrence of cusps is greatly reduced in $\R^2 \times P^1$.

Let $\gothic{R} \subset \R^{2} \times S^{1}$ denote the set of endpoints that can be connected to the origin $e=(0,0,0)$ by a SR geodesic $\gamma:[0,T] \to \R^2 \times S^{1}$
whose spatial control $u^{1}(t)>0$ for all $t \in [0,T]$.
Let $\tilde{\gothic{R}} \subset \R^{2} \times P^{1}$ denote the set of endpoints that can be connected to $e$ by a SR geodesic $\gamma:[0,T] \to \R^2 \times S^{1}$ whose spatial control $u^{1}(t)$ does not switch sign for all $t \in [0,T]$. 
Henceforth, such a SR geodesic whose spatial control $u^{1}(\cdot)$ does not switch sign will be called `cuspless' geodesic. 
\begin{proposition}\label{prop:4}
The set of reachable end-conditions in $\R^{2}\times P^{1}$ via `cuspless' SR geodesics departing from $e=(0,0,0)$  is given by
\begin{equation} \label{SetR}
\begin{array}{l}
\tilde{\gothic{R}} = \{ (x,y, \theta) \in \PTR \; | \; (x,y,\theta)\in \gothic{R}\; \ \textrm{ or } \; (x,y,\theta+\pi) \in \gothic{R}\  \\ \qquad \qquad \textrm{ or } \;(-x,y,-\theta) \in \gothic{R}
\;\textrm{ or } \;(-x,y,-\theta+\pi) \in \gothic{R} \;\textrm{ or }x=y=0\}.
\end{array}
\end{equation}
\end{proposition}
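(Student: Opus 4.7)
The plan is to lift each cuspless SR geodesic in $\R^{2}\times P^{1}$ to the two-fold covering $\R^{2}\times S^{1}$ and then exploit a natural discrete symmetry of $\SE$ to handle the sign-flipped case. Since $(x,y,\theta)\sim(x,y,\theta+\pi)$, any cuspless geodesic $\gamma:[0,T]\to\R^{2}\times S^{1}$ from $e=(0,0,0)$ whose endpoint class equals $[(x,y,\theta)]$ must terminate at one of the two lifts $(x,y,\theta)$ or $(x,y,\theta+\pi)$. By hypothesis the spatial control $u^{1}$ has constant sign on $[0,T]$, so combined with the two lift choices we obtain four scenarios, which I expect to correspond precisely to the four non-trivial clauses of \eqref{SetR}.

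When $u^{1}\geq 0$ and $u^{1}$ is not identically zero, the endpoint lies directly in $\gothic{R}$, yielding the first clause $(x,y,\theta)\in \gothic{R}$ or the second clause $(x,y,\theta+\pi)\in \gothic{R}$ according to the choice of lift. The degenerate sub-case $u^{1}\equiv 0$ is treated separately: horizontality $\dot\gamma\in \Delta$ means $\dot\gamma$ has no $\mathcal{A}_{3}$-component, i.e.\ $-\dot x\sin\theta+\dot y\cos\theta\equiv 0$, which together with $u^{1}\equiv 0$ forces $\dot x\equiv \dot y\equiv 0$; hence $\gamma$ is a pure in-place rotation and its endpoint satisfies $x=y=0$, which is the final clause of \eqref{SetR}. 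For the $u^{1}\leq 0$ case I would introduce the involution $\tau:(x,y,\theta)\mapsto (-x,y,-\theta)$ and verify, using \eqref{product} and the explicit form of the left-invariant frame, that $\tau$ fixes $e$ and satisfies $\tau_{*}\mathcal{A}_{1}=-\mathcal{A}_{1}\circ\tau$, $\tau_{*}\mathcal{A}_{2}=-\mathcal{A}_{2}\circ\tau$. Consequently $\tau$ preserves the horizontal distribution $\Delta$ and flips $(u^{1},u^{2})\mapsto (-u^{1},-u^{2})$ along every horizontal curve. Provided the cost $\mathcal{C}$ is $\tau$-invariant (automatic for $\mathcal{C}=1$), $\tau$ is an SR isometry and therefore maps geodesics to geodesics. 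Applying $\tau$ to a $u^{1}\leq 0$ geodesic produces a cuspless geodesic from $e$ with $u^{1}\geq 0$ terminating at the $\tau$-image of the endpoint, namely $(-x,y,-\theta)$ or $(-x,y,-\theta+\pi)$ (using $-\theta-\pi\equiv -\theta+\pi\pmod{2\pi}$), providing the third and fourth clauses.

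For the reverse inclusion, each of the four clauses supplies an explicit cuspless geodesic in $\R^{2}\times S^{1}$ whose endpoint projects to $(x,y,\theta)\in\R^{2}\times P^{1}$: the first two clauses do so directly, and the last two via the $\tau$-image of a $\gothic{R}$-geodesic. I expect the main technical obstacle to be a clean verification that $\tau$ is an SR isometry sending geodesics to geodesics; this reduces to the frame identities above together with $\tau$-invariance of $\mathcal{C}$, after which the case analysis is essentially bookkeeping. A secondary subtlety is that $\gothic{R}$ is defined with strict positivity $u^{1}>0$ and hence excludes pure rotations, which is why the $x=y=0$ clause must be added by hand rather than absorbed into $\gothic{R}$.
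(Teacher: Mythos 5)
Your proposal is correct and follows essentially the same route as the paper: split according to the two lifts of the endpoint in $\R^{2}\times S^{1}$ and the (constant) sign of $u^{1}$, use the reflection $(x,y,\theta)\mapsto(-x,y,-\theta)$ — which maps SR geodesics to SR geodesics and flips $u^{1}\mapsto -u^{1}$ — to reduce the negative-control case to membership in $\gothic{R}$, and treat the pure-rotation case $u^{1}\equiv 0$ separately as the $x=y=0$ clause. The only substantive item the paper includes that you omit is the remark (via \cite[Thm.10, Remark~5.5]{DuitsJMIV}) that the `or' clauses in \eqref{SetR} are mutually exclusive, which is not needed for the stated set equality.
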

\begin{proof}
A point $(x,y,\theta) \in \R^{2} \times P^{1}$ can be reached with a `cuspless' SR geodesic if 1) $(x,y,\theta) \in \R^{2} \rtimes S^{1}$ can be reached with a `cuspless' SR geodesic in $\SE$ or 2) if $(-x,y,-\theta)$ can be reached with a `cuspless' SR geodesic in $\SE$.
Recall from \cite[Thm.7]{DuitsJMIV} that $(x,y,\theta) \in \gothic{R} \Rightarrow \left( x\geq 0 \textrm{ and }(x,y) \neq (0,0)\right)$.
If $x\geq 0$ and $(x,y) \neq (0,0)$, the first option holds if $(x,y,\theta)\in \gothic{R}$, and the second option holds if $(x,y,\theta+\pi)\in \gothic{R}$.
If $x<0$, the endpoint can only be reached by a `cuspless' SR geodesic in $\SE$ with a negative spatial control function $u^{1} < 0$.
Here we rely on symmetry $(x,y,\theta) \mapsto (-x,y,-\theta) \Rightarrow (x(t),y(t),\theta(t)) \mapsto (-x(t),y(t),-\theta(t)))$ that holds for
SR geodesics $(x(\cdot),y(\cdot),\theta(\cdot))$ in $\SE$.
For the control $u^1$ in (\ref{eq:geodcontsystorig}), this symmetry implies $u^1(t) \mapsto -u^1(t)$.
By \cite[Thm.10]{DuitsJMIV} one has $(x,y,\theta) \in \gothic{R} \Rightarrow (x,y,\theta+\pi) \notin \gothic{R}$, and 
points with $x=y=0$ are not in $\gothic{R}$ \cite[Remark~5.5]{DuitsJMIV}
so all `or' conditions in (\ref{SetR}) are exclusive.$\hfill \Box$ 
\end{proof}
\vspace{-0.3cm}

Set $\gothic{R}$ yields a single cone field of reachable angles in $x>0$, see\!~\cite[fig~14,\!~thm\!~9]{DuitsJMIV}.
By Prop.\!~\ref{prop:4}, set $\tilde{\gothic{R}}$ is a union of 2 such cone fields that is also reflected to $x<0$.
\vspace{-0.3cm}

\section{Practical Advantages in Vessel Tracking}\label{sec:vessel}
\vspace{-0.2cm}
Distance $W(q)$ can be numerically obtained by solving the eikonal PDE of Eq.~(\ref{eq:eikonalsystfull}) via similar approaches as was previously done for the $\SE$ case. E.g., via an iterative upwind scheme \cite{BekkersSIAM}, or a fast marching solver \cite{mirebeau} in which case the SR metric tensor is approximated by an anisotropic Riemannian metric tensor \cite{sanguinetti2015}. A gradient descent (cf. Eq.~(\ref{BT})) on $W$ then provides the SR geodesics.

We construct the cost function $\mathcal{C}$ in the same way as in \cite{BekkersSIAM}: (1) a retinal image is lifted via the orientation score transform using cake wavelets \cite{duits2007}; (2) vessels are enhanced via left-invariant Gaussian derivatives using $\mathcal{A}_3$; (3) a cost function is constructed via $\mathcal{C} = \frac{1}{1+\lambda \mathcal{V}^p}$, with $\mathcal{V}$ the max-normalized vessel enhanced orientation score, and with $\lambda$ and $p$ respectively a ``cost-strength'' and contrast parameter. We use the same data and settings ($\lambda=100$, $p=3$ and $\xi = 0.01$) as in \cite{BekkersSIAM}, and perform vessel tracking on 235 vessel segments. For the results on all retinal image patches, see {\small \url{http://erikbekkers.bitbucket.io/PTR2.html}}.

Fig.~\ref{fig:RetinaExperiment} shows the results on three different vessel segments with comparison between SR geodesics in $\SE$ and $\PTR$. As expected, with the $\PTR$ model we always obtain the $\SE$ geodesic with minimum SR length (cf.~Eq.~(\ref{dist})). This has the advantage that overall we encounter less cusps in the tracking. Additionally, the $\PTR$ model is approximately four times faster since now we only have to consider half of the domain $\R^2 \times S^1$,
and by (\ref{eq:eikonalsystfull}) we only need to run once (instead of twice). The average computation time for constructing $W$ with the $\SE$ model for $180 \times 140$ pixel patches is $14.4$ seconds, whereas for the $\PTR$ model this is only 3.4 seconds. The rightmost image in Fig.~\ref{fig:RetinaExperiment} shows an exceptional case in which the reversed boundary condition (red arrow) is preferred as this leads to a geodesic with only one cusp instead of two. Recent work \cite{duitsmeestersmirebeauportegies} proposes to deal with such cusp problems by relying on a positive control model ($u_1 > 0$), introducing more natural corner points instead of cusps. Also there one benefits from working with the projective line bundle \cite[Fig.12]{duitsmeestersmirebeauportegies}.
\begin{figure}
\includegraphics[width=\hsize]{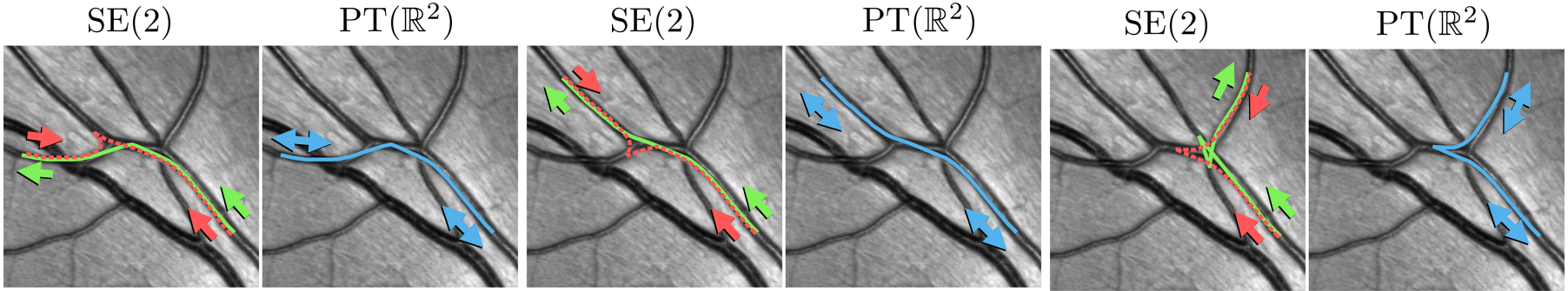}
\caption{Data-adaptive SR geodesics in $\SE$ (in green and red-dashed) compared to SR geodesics in $\PTR$ (in blue). For the $\SE$ case we specify antipodal boundary conditions since the correct initial and end directions are not known a priori.}
\label{fig:RetinaExperiment}
\end{figure}
\vspace{-0.7cm}
\section{Conclusion}
\vspace{-0.3cm}
We have shown the effect of including the projective line bundle structure SR in optimal geodesic tracking (Prop.\!~\ref{prop:nul}), in SR geometry (Prop.\!~\ref{prop:1}), and in  Maxwell-stratification (Prop.\!~\ref{prop:2}), and in the occurrence of cusps in spatially projected geodesics (Prop.\!~\ref{prop:4}). It supports our experiments that show benefits of including such a projective line bundle structure: A better vessel tracking algorithm 
with a reduction of cusps and computation time.
As the cusp-free model without reverse gear~\cite{duitsmeestersmirebeauportegies} also benefits~\cite[Fig.12]{duitsmeestersmirebeauportegies} from $\PTR$-structure, we leave the Maxwell stratification of this combined model for future work.
%
%

\end{document}